\def\classification#1{\def\@class{#1}}
\DeclareFontFamily{OT1}{rsfs}{}
\DeclareFontShape{OT1}{rsfs}{n}{it}{<-> rsfs10}{}
\DeclareMathAlphabet{\mathscr}{OT1}{rsfs}{n}{it}
\newcommand{\eps}{\varepsilon}
\newcommand{\R}{{\mathbb R}}
\newcommand{\Pro}{{\mathbb P}}
\newcommand{\Z}{\mathbb{Z}}
\newcommand{\F}{\mathbb{F}}
\newcommand{\E}{\mathbb{E}}
\newcommand{\Ka}{\mathcal{K}}
\def\G{\Gamma}
\def\E{\mathsf {E}}
\newtheorem{theorem}{Theorem}
\newtheorem{proposition}[theorem]{Proposition}
\newtheorem{corollary}[theorem]{Corollary}
\theoremstyle{remark}
\newtheorem{remark}[theorem]{Remark}
\title{New sum-product type estimates over finite fields}
\author{Oliver Roche-Newton, Misha Rudnev and Ilya D. Shkredov}
\address{O. Roche-Newton: School of Mathematics and Statistics, Wuhan University, Wuhan, Hubei Province, P.R.China. 430072 }
\email{o.rochenewton@gmail.com }
\address{Misha Rudnev, Department of Mathematics, University of Bristol,
Bristol BS8 1TW, United Kingdom}
\email{m.rudnev@bristol.ac.uk}
\address{Ilya D. Shkredov, Steklov Mathematical Institute, Division of Algebra and Number Theory, ul. Gubkina, 8, Moscow, Russia, 119991 and IITP RAS, Bolshoy Karetny per. 19, Moscow, Russia, 127994}
\email{ilya.shkredov@gmail.com}
\subjclass[2000]{68R05,11B75}
\begin{document}
\begin{abstract} Let $F$ be a field with positive odd characteristic $p$.  We prove a variety of new sum-product type estimates over $F$.  They are derived from the theorem that the number of incidences between $m$ points and $n$ planes in the projective three-space $PG(3,F)$, with $m\geq n=O(p^2)$,
is
$$O( m\sqrt{n} + km ),$$
where $k$ denotes the maximum number of collinear  planes.

The main result is a significant improvement of the state-of-the-art sum-product inequality over fields with positive characteristic, namely that
\begin{equation}\label{mres}
|A\pm A|+|A\cdot A| =\Omega \left(|A|^{1+\frac{1}{5}}\right),
\end{equation} for any $A$ such that $|A|<p^{\frac{5}{8}}.$

\end{abstract}

\maketitle

\section{Introduction}  Let $F$ be a field with positive odd characteristic $p$, i.e. $F=\F_q$, where $q$ is a power of the prime $p$. In this paper we prove combinatorial-geometric estimates on sum and product sets over $F$, which are in a certain sense similar to those over the real and complex fields, obtained geometrically via the Szemer\'edi-Trotter theorem after the work of Elekes, \cite{E}. See also \cite{ENS}, \cite{ER}, \cite{ss}, \cite{LR1}, \cite{KR}. Our new results appear considerably stronger that what has been known so far in the finite field setting, where the main techniques were arithmetic-combinatorial and were among other sources laid down in \cite{BKT}, \cite{BGK}, see also \cite{tv} as a general reference.

For instance, we establish a new sum-product bound
$$
|A\pm A|+|A\cdot A| =\Omega \left(|A|^{1+\frac{1}{5}}\right),
$$ for any $A\subset F$ such that $|A|<p^{\frac{5}{8}}.$ This is a considerable improvement over the previously established best results in \cite{RR}, \cite{LR}, which were based on purely arithmetic techniques. In spirit, our main result is akin to the well-known sum-product estimate of Elekes, \cite{E}, yielding the exponent ${1+\frac{1}{4}}$ for reals.

As usual, we use the notation $|\cdot|$ for cardinalities of finite sets. Symbols $\ll$, $\gg,$ suppress absolute constants in inequalities, as well as respectively do the symbols $O$ and $\Omega$. Besides,  $X=\Theta(Y)$ means that $X=O(Y)$ and $X=\Omega(Y)$. The symbols $C$ and $c$ stand for absolute constants, which may change from line to line. When we turn to sum-products, we use the standard notation
$$A+A=\{a_1+a_2:\,a_1,a_2\in A\}$$ for the sumset $A+A$ of $A\subseteq F$, and similarly for the product set $AA$, alias $A\cdot A$. Sometimes we write $nA$ for multiple sumsets, e.g. $A+A+A=3A$, as well as $A^{-1}=\{a^{-1}: a\in A\setminus\{0\}\}.$

We use in the paper  the same letter to denote a set
$S\subseteq F$ and its characteristic function $S:F\rightarrow \{0,1\}.$
We write $\E(A,B)$ for the {\it additive energy} of two sets $A,B \subseteq F$, that is
$$
    \E(A,B) = |\{ a_1+b_1 = a_2+b_2 ~:~ a_1,a_2 \in A,\, b_1,b_2 \in B \}| \,.
$$
If $A=B$ we simply write $\E(A)$ instead of $\E(A,A).$ Similarly, $$\E_k (A) = |\{ a_1-a'_1 = \dots = a_k-a'_k ~:~ a_j,a'_j \in A \}|.$$

Throughout the paper $P$ will denote a set of $m$ points in $F^3$ or  $PG(3,F)$ and $\Pi$ a set of  $n$ planes.

Given an arrangement $\{P,\Pi\}$ of planes and points in $F^3$, the set of incidences is defined as
$$
I(P,\Pi) =\{(\rho,\pi)\in P\times \Pi:\,\rho\in\pi\}.
$$

\medskip
The main tool in this paper is an incidence theorem proven by the second author, \cite{R}, as follows.

\begin{theorem}  \label{mish} Let $P, \Pi$ be sets of points and planes, of cardinalities respectively $m$ and $n$, in  $PG(3,F)$, with $m\geq n$.  Suppose, $F$ has positive characteristic $p\neq 2$ and $n=O(p^2)$. Let $k$ be the maximum number of collinear planes.

Then
\begin{equation}\label{pups}
|I(P,\Pi)|=O( m\sqrt{n} + km).\end{equation}
\end{theorem}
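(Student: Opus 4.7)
The plan is to apply the Plücker embedding, converting the point-plane problem in $PG(3,F)$ into an incidence problem for $\alpha$- and $\beta$-planes on the Klein quadric $\mathcal{K}\subset PG(5,F)$, and then to deploy a polynomial-partitioning argument on $\mathcal{K}$ in the spirit of Guth and Katz. Under the Plücker correspondence a point $\rho\in PG(3,F)$ maps to an $\alpha$-plane $\Sigma_\rho\subset\mathcal{K}$ (the family of lines through $\rho$), a plane $\pi\in PG(3,F)$ maps to a $\beta$-plane $\Sigma_\pi$ (the family of lines in $\pi$), and the incidence $\rho\in\pi$ is equivalent to $\Sigma_\rho\cap\Sigma_\pi$ being a line on $\mathcal{K}$ (the pencil of lines in $\pi$ through $\rho$). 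The hypothesis $p\neq 2$ puts $\mathcal{K}$ into the standard nondegenerate form, and $n=O(p^2)$ is the regime in which a partitioning polynomial of degree $D\asymp\sqrt n$ stays comfortably below $p$.

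As a sanity check I would first try the elementary Cauchy-Schwarz bound. Writing $I=|I(P,\Pi)|$, $r_\ell=|P\cap\ell|$, and $s_\ell=|\{\pi\in\Pi:\ell\subset\pi\}|\leq k$, and grouping pairs of distinct points on a common plane by the line they span,
\[
I^2\leq n\sum_\pi|\pi\cap P|^2 = n\Bigl(I+\sum_\ell r_\ell(r_\ell-1)s_\ell\Bigr)\leq nI+nkm^2,
\]
which gives only $I=O(m\sqrt{nk})$. This is strictly weaker than the target $m\sqrt n+km$ in the intermediate range of $k$, so some genuine geometric input is required.

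For the geometric step I would choose a partitioning polynomial $f$ of degree $D\asymp\sqrt n$ on $\mathcal{K}$; the complement of $\{f=0\}$ breaks $\mathcal{K}$ into cells each containing few $\beta$-planes, and a Cauchy-Schwarz cell-by-cell produces the term $O(m\sqrt n)$. Points of $P$ whose $\alpha$-plane lies in $\{f=0\}\cap\mathcal{K}$ are confined to an algebraic subvariety of bounded degree; a ruled-surface analysis on $\mathcal{K}$ forces any remaining excess of incidences on this locus to come from $\beta$-planes sharing a common line on $\mathcal{K}$, i.e., from planes of $\Pi$ sharing a common line in $PG(3,F)$. This degenerate contribution is exactly what the $km$ term is designed to absorb.

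The main obstacle will be making the polynomial method effective in positive characteristic: one must arrange that $f$ is not a $p$-th power (to avoid Frobenius degenerations), use $n=O(p^2)$ to keep $D\ll p$, and verify that the ruled-surface classification on $\mathcal{K}$ survives in characteristic $p\neq 2$. Handling the degenerate components of $\{f=0\}\cap\mathcal{K}$ without double-counting the contribution of common-line configurations is where the finite-field analysis departs most sharply from the Guth-Katz template over $\mathbb{R}$.
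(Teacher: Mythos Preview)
Your opening move via the Klein quadric is exactly right, and matches the paper: points and planes in $PG(3,F)$ become $\alpha$- and $\beta$-planes on $\mathcal{K}\subset PG(5,F)$, and the incidence relation becomes the condition that an $\alpha$-plane and a $\beta$-plane meet along a line. The hypothesis $p\neq 2$ is used precisely to keep the two rulings distinct.

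The gap is in the next step. You propose to run a polynomial-partitioning argument on $\mathcal{K}$, choosing $f$ of degree $D\asymp\sqrt n$ so that the complement of $\{f=0\}$ breaks into cells each meeting few $\beta$-planes. Over a finite field this mechanism does not exist: the complement of a hypersurface in $F^d$ has no topological decomposition into connected components, there is no ham-sandwich theorem, and no notion of a cell. This is not a matter of avoiding $p$-th powers or keeping $D\ll p$; the partitioning step itself relies on the order (or at least the real topology) of $\mathbb{R}$. The paper says this explicitly in the Remark following the statement: one must \emph{bypass the polynomial partitioning technique, which relies on the order properties of reals}.

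What the paper (following \cite{R}) actually does after passing to $\mathcal{K}$ is different. One slices $\mathcal{K}$ by a generic hyperplane $PG(4,F)\subset PG(5,F)$, obtaining a non-degenerate line complex $\mathcal{G}$. Under this slice the $\alpha$- and $\beta$-planes become \emph{lines} in a three-dimensional quadric, and the original incidence becomes a line-line intersection. One then adapts the purely algebraic portion of Guth--Katz, namely their Theorem~2.10, whose proof rests on the Cayley--Salmon flecnode argument rather than on partitioning. That argument survives in positive characteristic provided the relevant degree stays below $p$, and this is where the hypothesis $n=O(p^2)$ enters. So: keep your Klein-quadric setup, but replace the partitioning paragraph by the hyperplane-section reduction to a line-incidence problem in $\mathcal{G}$, and then invoke the algebraic (not topological) half of \cite{GK}.
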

The assumption $m\geq n$ can be reversed in an obvious way using duality. In fact, for our applications one has roughly $m=n$.
Note that the trivial $km$ term may dominate the estimate \eqref{pups} only if $k=\Omega(\sqrt{n})$.

\begin{remark}On the technical level, to prove Theorem \ref{mish}, it suffices to adapt the algebraic proof of Theorem 2.10 in \cite{GK}. This enables one to bypass the polynomial partitioning technique, which relies on the order properties of reals, and hence, at least in its present form, does not extend beyond the real case.

The geometric concept enabling such a conversion in \cite{R} was to interpret incidences between points and planes  in the ``physical'' projective three-space  $PG(3,F)$ in the ``phase space'' of lines in $PG(3,F)$, namely the Klein quadric $\mathcal K\subset PG(5,F)$. See e.g. \cite{JS} for theoretical foundations. In short, unless $F$ has characteristic $2$, $\mathcal K$ has two rulings by two-planes, called $\alpha$- and $\beta$-planes. A point in $PG(3,F)$ gives rise to a two-plane in $\mathcal K$, a so called $\alpha$-plane, being the Klein image of the set of lines incident to the above point in the physical space. A plane in $PG(3,F)$ corresponds in $\mathcal K$ to a two-plane, a so called $\beta$-plane, the Klein image of the set of lines lying in the above plane in the physical space.  Two distinct planes of the same type always intersect in $\mathcal K$ at a point. Two planes of different types intersect in $\mathcal K$  if and only if the corresponding point and plane in $PG(3,F)$ are incident to one other. In this case, the corresponding $\alpha$ and $\beta$-plane in $\Ka$ then intersect degenerately, along a line. Thus one can restrict the corresponding incidence problem in $\mathcal K$ to the transverse intersection $\mathcal G$ of $\mathcal K$ with a random subspace $PG(4,F)$ in $PG(5,F)$. $\mathcal G$ is called a non-degenerate line complex. Replacing $F$ with its algebraic closure, the random non-degenerate line complex $\mathcal G$ may be chosen so that it does not contain any of the finite number of points in $\mathcal K$, where planes of the same type intersect. This is what makes it possible then to proceed with the proof of Theorem \ref{mish} along the lines of Theorem 2.10 in \cite{GK}.

But if $F$ has characteristic $2$, the whole paradigm seems to break down, for the two rulings of $\mathcal K$ with $\alpha$- and $\beta$-planes (arising respectively from the sets of orthogonal $3\times 3$ matrices with determinants $\pm 1$) coincide. Hence, even though Theorem \ref{mish} is vacuous for the small characteristic $p$, we have chosen to explicitly exclude the case $p=2$. \end{remark}

Some applications of Theorem \ref{mish} were pointed out in \cite{R}, here we aim to extend their scope, combining the estimate (\ref{pups}) with other tools that have been developed in the area. Note that Theorem \ref{mish} is formulated in a way so that it applies to all fields with characteristic $p$, in particular the prime residue field $\F_p$.  Thus $p$ inevitably appears in the hypotheses of the theorem, and hence a set $A\subset F$ for which we develop sum-product type estimates cannot be too large in terms of $p$. It is likely that under additional assumptions, $p$ can be replaced by $q$ both in Theorem \ref{mish} and its applications herein in the general finite field case $\F_q$.  This essentially comes down to removing the restriction in terms of $p$ in the classical Monge proof of the Cayley-Salmon theorem about flechnodal points on surfaces, underlying the proof of Theorem \ref{mish}. In general, this cannot be done, for there are counterexamples. However, one may conjecture an {\em only if} condition as to the scope of these counterexamples. See \cite{Vo}, \cite{Ko} and the references contained therein for more discussion in this direction.

\section{Acknowledgements} Part of this research was performed while the authors were visiting the Institute for Pure and Applied Mathematics (IPAM), which is supported by the National Science Foundation.

The first author was supported by the Austrian Science Fund (FWF): Project F5511-N26, which is part of the Special Research Program ``Quasi-Monte Carlo Methods: Theory and Applications. The third author was supported by grant Russian Scientific  Foundation RSF 14-11-00433.

The authors thank Adam Sheffer and Mubariz Garaev for comments throughout the
preparation of this manuscript. Special thanks to Igor Shparlinski for pointing out some applications of Theorem \ref{t:g^j}.

\section{Sum-product type estimates}
This section contains the short proof of our main result. It is  followed by a series of remarks, placing it in the context of the current state of the art. In a separate subsection we show how the exponents can be improved slightly, once longer sum sets have been taken.

\medskip
Theorem \ref{mish} enables one to count the maximum number of solutions of bilinear equations with six variables in some discrete sets.

\begin{theorem}\label{tool}
Let $A,B,C\subseteq F$, let $M = \max(|A|,|B|,|C|).$ Then
$$
|A+BC| =\Omega\left[\min\left(\sqrt{|A||B||C|},\frac{1}{M}|A||B||C|,\, p\right)\right].
$$\end{theorem}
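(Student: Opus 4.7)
The plan is to lower-bound $|A+BC|$ by controlling its representation function via a point--plane incidence in $PG(3,F)$. Let $r(s)=|\{(a,b,c)\in A\times B\times C:a+bc=s\}|$ and let $E=\sum_s r(s)^2$, so that $\sum_s r(s)=|A||B||C|$; Cauchy--Schwarz yields
$$|A+BC|\ \geq\ \frac{(|A||B||C|)^2}{E},$$
reducing the problem to an upper bound on $E$.

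The sum $E$ counts sextuples $(a_1,b_1,c_1,a_2,b_2,c_2)\in(A\times B\times C)^2$ solving $a_1+b_1c_1=a_2+b_2c_2$. I interpret it as an incidence count with points $P=\{(b_1,c_2,a_1):b_1\in B,\,c_2\in C,\,a_1\in A\}$ and planes $\Pi=\{\pi_{c_1,b_2,a_2}\}$ indexed by $(c_1,b_2,a_2)\in C\times B\times A$, where $\pi_{c_1,b_2,a_2}$ has equation $c_1X-b_2Y+Z=a_2$; the condition $(b_1,c_2,a_1)\in\pi_{c_1,b_2,a_2}$ is exactly the equation above, so $|I(P,\Pi)|=E$ and $|P|=|\Pi|=|A||B||C|$. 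Distinct index triples yield distinct planes (the $Z$-coefficient is normalized to $1$) and distinct points, so there is no double-counting.

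The main geometric step is to bound $k$, the maximum number of planes of $\Pi$ passing through a common line. In projective-dual language, such planes have coefficient vectors $(c_1,-b_2,1,-a_2)\in F^4$ lying in a two-dimensional linear subspace; since the third coordinate is fixed equal to $1$, a short linear-algebra check shows this is equivalent to the triples $(c_1,b_2,a_2)\in C\times B\times A$ being collinear on an affine line in $F^3$. Casework on the direction of such a line---axis-parallel versus generic---shows that it contains at most $M=\max(|A|,|B|,|C|)$ points of the grid $C\times B\times A$, so $k\leq M$.

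Assuming $|A||B||C|=O(p^2)$, Theorem \ref{mish} now yields $E=O((|A||B||C|)^{3/2}+M|A||B||C|)$, whence
$$|A+BC|\ =\ \Omega\!\left(\min\!\left(\sqrt{|A||B||C|},\ \tfrac{|A||B||C|}{M}\right)\right).$$
In the remaining regime $|A||B||C|\gg p^2$, I pass to balanced subsets $A'\subseteq A$, $B'\subseteq B$, $C'\subseteq C$ with $|A'||B'||C'|\asymp p^2$ and $M':=\max(|A'|,|B'|,|C'|)=O(p)$, apply the bound just obtained to the primed sets, and conclude $|A+BC|\geq|A'+B'C'|=\Omega(p)$; this accounts for the $p$-term in the minimum. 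The key technical difficulty is the collinearity analysis controlling $k$---the other steps are routine manipulations of energy and incidences.
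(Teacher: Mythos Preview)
Your proof is correct and follows essentially the same approach as the paper: bound the energy $E$ via a point--plane incidence with $m=n=|A||B||C|$ and $k\le M$, apply Theorem~\ref{mish}, then Cauchy--Schwarz, and pass to subsets when $|A||B||C|\gg p^2$. The only differences are cosmetic (your point/plane parametrisation swaps some coordinates) and that you supply the duality justification for $k\le M$ which the paper merely asserts.
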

\begin{proof}

Suppose, $|A||B||C|\leq cp^2$, with $c$ implicit in Theorem \ref{mish}. If this condition is not satisfied, pass to subsets $A,B,C$ to ensure it is just satisfied, so that the first and the third term in the claimed estimate are of the same order of magnitude.

Let $E$ be the number of solutions of the equation
\begin{equation}\label{threeen}
a+bc = a'+b'c',\qquad (a,b,c,a',b',c')\in A\times B\times C \times A\times B\times C.
\end{equation}
Let $P,\Pi$ be sets of points and planes in $F^3$, as follows:

$$\begin{aligned}
P & =\{(a,c,b')\},\\
\Pi & = \{\pi:\,x+by - c'z = a'\}.
 \end{aligned}
$$
Hence, there are $m=|A||B||C|$ points and $m$ planes. The maximum number $k$ of collinear points or planes is $M$.

Applying Theorem \ref{mish} yields
\begin{equation}
E=O\left(m^{\frac{3}{2}} + M m \right).
\label{energy}\end{equation}
By the Cauchy-Schwarz inequality,
$$
|A+BC|\geq \frac{|A|^2|B|^2|C|^2}{E}.
$$
The claim of the theorem follows, the characteristic $p$ appearing therein in the case when $|A||B||C|\gg cp^2$ .
\end{proof}
\begin{corollary}Let $A\subseteq F$. Then, for any $a\neq 0$,
$$
|aA\pm AA|=\Omega \left[ \min (|A|^{\frac{3}{2}},p) \right ], \qquad |A\pm AA \pm AA|=\Omega \left [ \min(|A|^{\frac{7}{4}},p)\right ].
$$
Suppose, in addition, that $|AA|=K|A|$, for some $K\geq 1$. Then
$$
|AA\pm AA|=\Omega \left [ \min (K^\frac{1}{2}|A|^{\frac{3}{2}},p) \right ], \qquad |AA+AA \pm AA|=\Omega \left[ \min (K^\frac{1}{4}|A|^{\frac{7}{4}},p) \right ].
$$
\label{aux}
\end{corollary}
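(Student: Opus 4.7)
The plan is to derive each of the four estimates by one or two direct applications of Theorem \ref{tool}. All four quantities have the shape $X\pm Y\cdot Z$ with $Y=A$, $Z=A$, and $X$ a sum-set; the first and third estimates are one-step applications, while the second and fourth are obtained by nesting, i.e.\ by feeding the lower bound on $X$ coming from the first and third estimates back into Theorem \ref{tool} as the ``additive'' set. The choice of sign is immaterial, since replacing $B=A$ by $B=-A$ only changes $BC$ from $AA$ to $-AA$ without affecting any cardinality in the conclusion of Theorem \ref{tool}.

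For $|aA\pm AA|$ I would apply Theorem \ref{tool} to the triple $(aA,\pm A,A)$. Here $|A||B||C|=|A|^3$ and $M=|A|$, so the first term in the conclusion is $|A|^{3/2}$ and the middle term is $|A|^2$, giving $\Omega(\min(|A|^{3/2},p))$. For $|AA\pm AA|$ I would take $(AA,\pm A,A)$; then $|A||B||C|=K|A|^3$ and $M=|AA|=K|A|$, so the first term is $K^{1/2}|A|^{3/2}$ and the middle term is again $|A|^2$. Since $K\leq |A|$ (from $|AA|\leq |A|^2$), the first term is the smaller of the two, producing $\Omega(\min(K^{1/2}|A|^{3/2},p))$.

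For the nested estimates I set $A':=A\pm AA$ or $A':=AA+AA$ respectively, and apply Theorem \ref{tool} to $(A',\pm A,A)$. In either case $|A'|\geq |A|$ (since $A'$ contains a translate of $A$ or of $AA$), so $M=|A'|$ and the middle term $|A'||A|^2/M$ equals $|A|^2$. The first term $\sqrt{|A'|\cdot |A|^2}$, after plugging in $|A'|=\Omega(\min(|A|^{3/2},p))$ or $|A'|=\Omega(\min(K^{1/2}|A|^{3/2},p))$, becomes $\Omega(|A|^{7/4})$ or $\Omega(K^{1/4}|A|^{7/4})$ in the regime where $|A'|$ is not already at the $p$ cap; using $K\leq |A|$ one checks that $|A|^2$ dominates either of these first terms, so they give the true minimum. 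In the complementary regime where $|A'|=\Omega(p)$, the trivial inclusion $|A'+BC|\geq |A'|$ already furnishes the required $\Omega(p)$ bound, so nothing further is needed.

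I do not see any serious obstacle. The whole argument is bookkeeping around the fact that Theorem \ref{tool} nests cleanly, the only arithmetic input being the cheap inequality $K\leq |A|$, which repeatedly guarantees that the first term in Theorem \ref{tool} is indeed the minimum of the first two.
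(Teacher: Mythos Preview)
Your proposal is correct and is exactly the approach the paper takes: the paper's proof is the single sentence ``apply Theorem \ref{tool} with $B,C=A$ and $A=aA,\ A\pm AA,\ AA,\ AA\pm AA$, respectively'', and your write-up simply unpacks this, verifying in each case which of the three terms in the minimum is smallest (using $K\le |A|$) and handling the $p$-cap regime by the trivial inclusion $|A'+BC|\ge |A'|$.
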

\begin{proof}
The estimates  follow by applying Theorem \ref{tool}, with $B,C=A$ and $A=aA,\,, A\pm AA,\,AA, AA\pm AA,$ respectively.
\end{proof}

\begin{remark} If $A$ is a set of positive reals, an elementary order-based argument  shown to us by M. Garaev proves that for some $a,b,c\in A$, $|aA+(b-c)A|\geq|A|^2.$ Just take $a$ the maximum element of $A$, and $b,c$ such that $|b-c|$ is the smallest possible and non-zero. This clearly implies that  $|AA+AA-AA|\geq|A|^2$ (but without one being able to replace the left-hand side with $|AA+AA+AA|$). A similar well-known argument gives the lower bound for $|A\pm AA| \geq|A|^2$ in the positive integer case.

This can be compared with the claims of the above Corollary, which works for any field, except being vacuous in the small positive characteristic case. But the above-mentioned elementary arguments do not enable one (as far as we understand) to get a more general energy-type bound similar to \eqref{energy}. This type of bound in the context of the set $AA\pm AA$ was obtained in \cite{R}, via Theorem \ref{mish}.
\end{remark}

Now we can derive a pure sum--product result in $F$.

\begin{theorem}\label{t:sum-prod}
    Let $A,B,C\subseteq F$, let $M = \max(|A|,|BC|).$
    Suppose that $|A||B||BC| \ll p^2$.
    Then
\begin{equation}\label{f:sum-prod_energy}
    \E(A,C) \ll (|A||BC|)^{3/2} |B|^{-1/2} + M |A||BC| |B|^{-1} \,.
\end{equation}
    In particular, for any $A$ with $|A| < p^{5/8}$ the following holds
\begin{equation}\label{f:sum-prod}
    \max\{ |A \pm A|, |AA| \} \gg |A|^{1 + \frac{1}{5}}
        \,,
\end{equation}
    and
\begin{equation}\label{f:sum-prod'}
    \max\{ |A \pm A|, |A:A| \} \gg |A|^{1 + \frac{1}{5}}
        \,.
\end{equation}
\end{theorem}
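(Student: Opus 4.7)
The plan is to first prove the energy estimate \eqref{f:sum-prod_energy}, and then deduce both sum-product bounds \eqref{f:sum-prod} and \eqref{f:sum-prod'} from it via the Cauchy--Schwarz inequality $|A\pm A|\geq |A|^{4}/\E(A)$ combined with a two-case arithmetic trade-off. For \eqref{f:sum-prod_energy}, I would convert $\E(A,C)$ into a point-plane incidence count. Each quadruple $(a_1,a_2,c_1,c_2)\in A^2\times C^2$ with $a_1+c_1=a_2+c_2$, together with any $b\in B$, yields the bilinear relation $ba_1+y_1=ba_2+y_2$ with $y_i:=bc_i\in BC$; injectivity of $(b,c_1,c_2)\mapsto(b,bc_1,bc_2)$ gives the key inequality
\[ |B|\cdot\E(A,C)\;\leq\; T\;:=\;\#\bigl\{(a_1,a_2,b,y_1,y_2)\in A^2\times B\times (BC)^2 : ba_1+y_1=ba_2+y_2\bigr\}. \]

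Next, I would realise $T$ as a point-plane incidence count in $F^3$: take points $P=\{(ab,b,y):(a,b,y)\in A\times B\times BC\}$ with $|P|=|A||B||BC|$, and planes $\Pi=\{\pi_{a',y'}:(a',y')\in A\times BC\}$ where $\pi_{a',y'}:X-a'Y-Z+y'=0$, with $|\Pi|=|A||BC|$; the incidence $(ab,b,y)\in\pi_{a',y'}$ is equivalent to $b(a-a')=y-y'$, matching the defining relation of $T$. Under the hypothesis $|A||B||BC|\ll p^{2}$, Theorem \ref{mish} applies. A short case analysis of lines in $F^{3}$ lying in many planes $\pi_{a',y'}$ (the only lines producing many coincidences are those with direction proportional to $(1,0,1)$, for which the constraint $y'=a'Y_0-X_0+Z_0$ must remain in $BC$ as $a'$ varies in $A$) shows that the maximum number $k$ of collinear planes satisfies $k\ll M$ with $M=\max(|A|,|BC|)$. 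Feeding this into Theorem \ref{mish} bounds $T$; to obtain the $|B|^{-1/2}$ saving in the main term of \eqref{f:sum-prod_energy}, I would write $T=\sum_{b\in B}\E(bA,BC)$ and apply Cauchy--Schwarz in the form $T^{2}\leq |B|\sum_{b}\E(bA,BC)^{2}$, then bound the squared sum by a second application of Theorem \ref{mish} to a paired incidence configuration encoding octuples of energy quadruples sharing a common multiplier $b$; this produces the $|B|^{1/2}$ gain and gives \eqref{f:sum-prod_energy}.

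The sum-product bounds then follow by specialising \eqref{f:sum-prod_energy}. For \eqref{f:sum-prod}, setting $B=C=A$ makes $BC=AA$, and \eqref{f:sum-prod_energy} becomes $\E(A)\ll |A||AA|^{3/2}+|AA|^{2}$. Combined with $|A\pm A|\geq |A|^{4}/\E(A)$, a short two-case argument shows that if both $|A\pm A|$ and $|AA|$ were bounded above by $c|A|^{6/5}$ for a sufficiently small absolute $c$, the resulting upper bound $\E(A)\ll c^{3/2}|A|^{14/5}$ and lower bound $\E(A)\geq c^{-1}|A|^{14/5}$ would be incompatible. The hypothesis $|A|<p^{5/8}$ is exactly what ensures $|A|^{2}|AA|\ll p^{2}$ under the operative assumption $|AA|<|A|^{6/5}$, so Theorem \ref{mish} is applicable throughout. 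For \eqref{f:sum-prod'}, the same argument works with $B=A^{-1}$ and $C=A$ (so that $BC=A/A$, and $|B|=|A|$), producing the analogue with $|A:A|$ in place of $|AA|$.

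The hardest step is the Cauchy--Schwarz manipulation yielding the $|B|^{-1/2}$ improvement in \eqref{f:sum-prod_energy}: the naive inequality $\E(A,C)\leq T/|B|$ combined with the raw output of Theorem \ref{mish} gives only $\E(A)\ll(|A||AA|)^{3/2}+|A|^{2}|AA|$, which is weaker than \eqref{f:sum-prod_energy} by a factor of $|A|^{1/2}$ in the dominant term and, when combined with $|A\pm A|\geq|A|^{4}/\E(A)$, yields only a trivial conclusion instead of the $1+1/5$ exponent. Correctly identifying the paired configuration---and verifying that its collinear-plane count remains $O(M)$---is the main technical hurdle of the proof.
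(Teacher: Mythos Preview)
Your derivation of the sum-product bounds \eqref{f:sum-prod} and \eqref{f:sum-prod'} from the energy estimate \eqref{f:sum-prod_energy} is fine and matches the paper. The problem is upstream, in your proof of \eqref{f:sum-prod_energy} itself.

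You multiply the equation $a_1+c_1=a_2+c_2$ by a \emph{single} $b\in B$, which gives only $|B|\,\E(A,C)\le T$. As you correctly diagnose, feeding your point-plane configuration (with $m=|A||B||BC|$ points and $n=|A||BC|$ planes) into Theorem~\ref{mish} produces $T\ll |A|^{3/2}|B||BC|^{3/2}+M|A||B||BC|$, hence only $\E(A,C)\ll (|A||BC|)^{3/2}+M|A||BC|$, a factor $|B|^{1/2}$ short in the main term. Your proposed remedy --- Cauchy--Schwarz on $T=\sum_{b}\E(bA,BC)$ followed by a ``paired incidence configuration encoding octuples'' --- is not actually carried out: you do not say what the points and planes are, you do not verify the collinearity parameter, and you do not show the resulting bound closes. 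You yourself flag this as ``the hardest step'' and ``the main technical hurdle''; as written it is a gap, not a proof.

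The paper avoids this difficulty entirely by a one-line trick you are missing: instead of multiplying through by a common $b$, write each $c$ as $c=(bc)\cdot b^{-1}$ with an \emph{independent} choice of $b$ on each side. That is, for any $b,b'\in B$ the equation $a+c=a'+c'$ is the same as
\[
a+(cb)\cdot b^{-1}=a'+(c'b')\cdot (b')^{-1},
\]
so
\[
|B|^{2}\,\E(A,C)\ \le\ \bigl|\{a+st=a'+s't':\,a,a'\in A,\ s,s'\in BC,\ t,t'\in B^{-1}\}\bigr|.
\]
Now the right-hand side is exactly the quantity $E$ in the proof of Theorem~\ref{tool} with the three sets $A$, $BC$, $B^{-1}$; the point and plane families each have size $m=|A||BC||B|$, and a single application of Theorem~\ref{mish} gives $|B|^{2}\E(A,C)\ll m^{3/2}+Mm$, which is precisely \eqref{f:sum-prod_energy} after dividing by $|B|^{2}$. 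No second pass, no octuple configuration, no Cauchy--Schwarz is needed. The whole point is to introduce \emph{two} auxiliary $B$-variables rather than one, so that the bookkeeping factor on the left is $|B|^{2}$ while the incidence count scales like $|B|^{3/2}$ in the main term.
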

\begin{proof}
The main estimate for the purpose of this proof is  for the energy
$$
\E(A,C) =  |\{ a+c = a'+c' ~:~ a,a'\in A,\, c,c' \in C \}|.
$$
    We may clearly assume that $0\not\in B$ and that the sets are non-empty. Then
 \begin{equation}\begin{aligned}
  \E(A,C) & =  |B|^{-2} |\{ a+cb/b = a'+c'b'/b' ~:~ a,a'\in A,\, c,c' \in C,\,b,b'\in B \}| \\
  &\leq |B|^{-2} |\{ a+ st  = a'+s't' ~:~ a,a'\in A,\, s,s' \in BC,\,t,t'\in B^{-1} \}|.\end{aligned}
  \label{rear}\end{equation}

  We now apply  the key estimate \eqref{threeen} in the proof of Theorem \ref{tool} with $A=A$, $B=BC$, $C=B^{-1}$. It follows that
$$
    |B|^2 \E(A,C) \ll m^{3/2} + mM,
$$
with $m=|A||B||BC|$ and $M=\max(|A|,|B|,|C|).$
This proves (\ref{f:sum-prod_energy}).

Moving on to proving \eqref{f:sum-prod}, we assume $|AA|\ll |A|^{\frac{6}{5}},$ or there is nothing to prove. We therefore use \eqref{f:sum-prod_energy} with $A=B=C=A$. Then the condition $|A| < p^{5/8}$ guarantees that $|A|^2 |AA| \ll p^2$, and therefore we can apply the the bound (\ref{f:sum-prod_energy}), and the first term dominates therein.

Then, by the Cauchy--Schwarz inequality, we obtain
$$
    \frac{|A|^4}{|A\pm A|} \le \E(A) \le |A| |AA|^{3/2},
$$
which gives

\begin{equation}
\label{SP}
|A\pm A|^2|AA|^3 \gg |A|^6,
\end{equation}
and we are done.
The bound (\ref{f:sum-prod'}) is established in exactly the same way, but instead taking $B=A^{-1}$ in the application of \eqref{f:sum-prod_energy}. This concludes the proof.
\end{proof}

\medskip
We make a few observations, putting our bounds in the context of other results concerning various sum-product type questions.
\begin{remark} For large enough subsets of the finite field $F=\mathbb F_q$,  that is usually when $|A|\gg \sqrt{q}$, sum-product estimates can be obtained by calculations which may involve exponential sums, see e.g. \cite {G}, but in essence can often be reduced just to linear algebra, see \cite{V}, scarcely making a difference between $\F_q$ and the prime residue field $\F_p$.

So let $F=\mathbb F_p$. It is known that there exists $A$, such that
$$ \max\{ |A \pm A|, |AA| \} \ll \sqrt{p}\sqrt{|A|}.$$
Garaev, \cite{G}, uses exponential sums to reverse the above inequality, provided that $|A|\gg p^{\frac{2}{3}}.$ But he can only do
\begin{equation}\label{Gar}|A\pm A||AA| \gg \frac{|A|^4}{p}\end{equation}
for smaller $|A|$.
In this paper the condition $|A|<p^{5/8}$ arises to ensure that Theorem 1 applies. However, this is precisely when, assuming $|AA|\approx |A\pm A|$, the estimate \eqref{SP}  becomes stronger than \eqref{Gar}.
\end{remark}

\begin{remark} Let us call the {\em weak Erd\H os-Szemer\'edi conjecture} a claim that for a set $A$ in a field $F$ (small enough if $F$ has large positive characteristic $p$), if $|AA|=O(|A|^{1+\delta})$, then $|A+A|=\Omega(|A|^{2-\epsilon})$ where the small parameters $\delta,\epsilon$ are related algebraically. Let us write this statement in shorthand as \begin{equation}\label{wes}|AA|\lesssim |A|\;\;\Rightarrow \;\;|A+A|\gtrsim|A|^2.\end{equation}The question is open for any field but rationals, for it has been resolved in the integer setting by Bourgain and Chang, \cite{BC}. This furthered an earlier paper by Chang, \cite{Ch}, which showed that $|A+A|\gtrsim|A|^2$, given that $|AA|$ does not exceed $|A|$ by a just factor of $\log|A|$. Moreover, Chang then showed that the latter result readily extends over $\R$, owing to the subspace theorem, \cite{Ch1}.
\label{thr}
\end{remark}

\begin{remark} Techniques and concepts engaged to explore the weak Erd\H os-Szemer\'edi conjecture have been quite different from the geometric incidence approach, which is also the scope of this paper. The latter approach has been fruitful to establish quantitative sum-product inequalities in all ranges of $|AA|,|A+A|$.

If $F=\mathbb R$ or $\mathbb C$, Elekes and Elekes-Ruzsa -- see respectively \cite{E}, \cite{ER} -- showed that  the following inequalities can be derived from the Szemer\'edi-Trotter theorem:
\begin{eqnarray} |A\pm A|^2|AA|^2 &  \gg  & |A|^5, \label{el} \\ \hfill \nonumber \\ 
 |A\pm A|^4|AA| \log{|A|} &  \gg  & |A|^6.\label{elr}\end{eqnarray}
Solymosi's  inequality
\begin{equation}\label{si}|A+A|^2|AA|\log{|A|} \gg {|A|^4}\end{equation} also holds over $\mathbb R$ and $\mathbb C$, \cite{solymosi}, \cite{KR}.
Observe that the above-mentioned inequalities \eqref{el}, \eqref{si}, as well as our inequality \eqref{SP} all imply only that
\begin{equation}|AA|\lesssim |A|\;\;\Rightarrow \;\;|A+A|\gtrsim|A|^{3/2}.\label{thri}\end{equation}  The same threshold  exponent $\frac{3}{2}$ in the context of large sets in finite fields is implicit in the constraint $|A|\gg p^{\frac{2}{3}}$ mentioned in the previous remark.

The worst possible case for incidence-based sum-product inequalities is precisely the case when $|AA|$ is relatively small, that is the scope of the weak Erd\H os-Szemer\'edi conjecture. See the second line of estimates in Corollary \ref{aux} in this paper, as well as Section 6.1 in  \cite{R}.  This agrees with Chang's observation in \cite{Ch1} that general geometric incidence theorems of Szemer\'edi-Trotter type appear to be insufficient for the resolution of the weak Erd\H os-Szemer\'edi conjecture. 
\label{thr1}
\end{remark}

\begin{remark} The current state of the art has not yet enabled one to efficiently combine the results and techniques addressed in the above two remarks. 
Still, the inequality \eqref{thri} has been given some improvement  for real and complex fields, owing to the idea of using higher degree convolutions, which has been exploited in a series of works of the third author and collaborators.  See e.g. \cite{ss}, \cite{LR1}, as well as \cite{VS}, where this was done in the context of sums of multiplicative subgroups in $\F_p$.

A recent work \cite{KS} of the third author and S.V. Konyagin sets a new ``world record'', replacing the exponent $\frac{3}{2}$ in the inequality \eqref{thri}  over the real and complex fields by $\frac{19}{12}$. Moreover, over these fields the paper succeeds in establishing the state-of-the-art sum-product inequality
$$|A+A| +|AA| \gg |A|^{4/3+c},$$
for some small $c>0$, having improved the previously best known exponent $\frac{4}{3}$, established by Solymosi, \cite{solymosi}, back in 2008.
\label{thr2}
\end{remark}

\begin{remark} Our sum-product exponent $\frac{6}{5}$ in Theorem \ref{t:sum-prod} coincides with the one established in a different way by Bloom and Jones for the filed $\F_q(x^{-1})$ of Laurent series  over $\F_q$, the implicit constants depending on $q$, as well as the $p$-adic field $\mathbb Q_p$. In principle, our result implies that the above constants can be made $q$-independent, for sets of the size $O(p^2)$. Indeed, one can take for $F$ the algebraic closure of $\F_p$. Then Theorem  \ref{t:sum-prod} extends to the polynomial ring $F[x]$ and the rational field $F(x)$, since a polynomial has a finite number of zeroes and therefore one can find $x=x_0$, such that the evaluation map from $F(x)$ to $F$, acting simply as setting  $x=x_0$ preserves the structure of sums and products. Presumably, the same can be done with semi-infinite Laurent series after suitably truncating them.

We have learnt this basic evaluation argument for how to extend sum-product estimates from a  ring $F$ to $F[x]$ from the paper of Croot and Hart, \cite{CH}. They prove a very strong statement that $|AA|<|A|^{1+c}$ for some $c>0$ implies $|A+A|\gg |A|^2$ for any set $A$ of monic polynomials in the polynomial ring $\mathbb C[x]$, and in principle for any set of polynomials, once no two polynomials are scalar multiples of each other. Their proof for polynomials  extends verbatim for positive integers, conditional on a certain version of the Last Fermat Theorem. \label{blj}\end{remark}

\subsection{Estimates for longer sum sets}

By iteratively applying Theorem \ref{t:sum-prod}, and in particular the inequality \eqref{SP}, it is possible to obtain slightly improved estimates for longer sum sets. Here we confine ourselves to inequalities involving the sets $3A$ and $4A$ versus $AA$. These estimates can be compared with those for convex sets of reals, obtained by iteratively applying the Szemer\'edi-Trotter theorem by Elekes, Nathanson and Ruzsa, \cite{ENS}.

We have:

\begin{corollary} \label{t:sum-prod2}
Let $A \subset \mathbb F_q$ such that $|A|<cp^{18/35}$. Then
\begin{equation}\label{three}|A+A+A|^4|AA|^9 \gg |A|^{16},\end{equation}
and in particular
$$\max \{|A+A+A|,|AA|\} \gg |A|^{16/13}.$$
\end{corollary}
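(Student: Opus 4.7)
The plan is to iterate the proof of Theorem~\ref{t:sum-prod} with the substitution $A\leftarrow A+A$, $B\leftarrow A$, $C\leftarrow A$ in the energy estimate~\eqref{f:sum-prod_energy}, and then feed the resulting bound back into~\eqref{SP}.

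I may assume $|A+A+A|<|A|^{16/13}$ and $|AA|<|A|^{16/13}$, since otherwise \eqref{three} is immediate from $|A+A+A|\geq|A|$. Because $|A+A|\leq|A+A+A|$ (after translating $A+A$ by any fixed element of $A$), these assumptions combine with $|A|<cp^{18/35}$ to give $|A+A|\cdot|A|\cdot|AA|<|A|^{45/13}\ll p^2$, using the arithmetic fact $45\cdot 18<2\cdot 13\cdot 35$. Hence the $p$-constraint of \eqref{f:sum-prod_energy} is satisfied for my substitution, and the milder constraint $|A|^2|AA|\ll p^2$ needed for the plain application of \eqref{SP} holds a fortiori.

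Applying \eqref{f:sum-prod_energy} with the slots $(A,B,C)=(A+A,A,A)$ then yields
\begin{equation*}
\E(A+A,A)\ll \frac{(|A+A|\,|AA|)^{3/2}}{|A|^{1/2}},
\end{equation*}
where the first-term dominance in \eqref{f:sum-prod_energy} is routine to check from the WLOG bounds and $|A+A|,|AA|\geq|A|$. Combining this with the Cauchy--Schwarz lower bound $\E(A+A,A)\geq|A+A|^2|A|^2/|A+A+A|$ and rearranging produces the intermediate inequality
\begin{equation*}
|A+A+A|^2 \, |AA|^3 \gg |A+A|\,|A|^5.
\end{equation*}
Finally, inserting the bound $|A+A|\gg|A|^3/|AA|^{3/2}$ supplied by \eqref{SP} gives $|A+A+A|^2\,|AA|^{9/2}\gg|A|^8$, which on squaring is exactly \eqref{three}. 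The ``in particular'' claim then follows from $\max\{|A+A+A|,|AA|\}^{13}\geq|A+A+A|^4|AA|^9\gg|A|^{16}$.

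The only step I expect to need any real care is the bookkeeping of the $p$-threshold: the exponent $18/35$ is exactly what keeps the constraint $|A+A|\cdot|A|\cdot|AA|\ll p^2$ valid under the worst-case WLOG scenario $|AA|,|A+A+A|\approx|A|^{16/13}$, so this arithmetic is what ties the cutoff in the statement to the cutoff in the hypothesis of \eqref{f:sum-prod_energy}.
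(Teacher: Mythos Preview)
Your overall strategy---bound $\E(A+A,A)$ via \eqref{f:sum-prod_energy}, combine with Cauchy--Schwarz, and feed \eqref{SP} back in---is exactly the paper's, and from the energy estimate onward your computation is correct. The gap is in your opening WLOG reduction.

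You claim that if $|A+A+A|\ge |A|^{16/13}$ or $|AA|\ge |A|^{16/13}$ then \eqref{three} is immediate from $|A+A+A|\ge |A|$. This is false: for instance if $|AA|=|A|^{16/13}$ while $|A+A+A|$ is only of order $|A|$, then $|A+A+A|^4|AA|^9$ is of order $|A|^{4+144/13}=|A|^{196/13}$, which is strictly smaller than $|A|^{16}=|A|^{208/13}$. So in the ``otherwise'' case you have only established the \emph{in particular} conclusion, not \eqref{three} itself. (Your argument inside the WLOG case does derive \eqref{three}, but that derivation then contradicts the WLOG hypotheses, so the case is vacuous---again yielding only the weaker conclusion.)

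The fix, which is what the paper does, is to assume instead that $|A+A|^4|AA|^9\le |A|^{16}$; since $|A+A+A|\ge |A+A|$, failure of this hypothesis gives \eqref{three} directly. Under this correct WLOG one writes
\[
|A+A|\,|A|\,|AA|=(|A+A|^4|AA|^9)^{1/9}\,|A+A|^{5/9}\,|A|\le |A|^{16/9}\cdot |A|^{10/9}\cdot |A|=|A|^{35/9},
\]
using $|A+A|\le |A|^2$, and the constraint $|A|^{35/9}\ll p^2$ is precisely $|A|\ll p^{18/35}$. This is the genuine origin of the exponent $18/35$; under your (too restrictive) WLOG the $p$-constraint holds with slack, which is why your final paragraph about tightness does not match the numbers. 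With this single change to the WLOG, the remainder of your proof goes through unchanged and yields \eqref{three} in full.
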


\begin{proof}
Let us assume that $|A+A|^4|AA|^9 \leq |A|^{16}$, as otherwise there is nothing to prove. Therefore,
\begin{equation}
|A+A||A||AA| = (|A+A|^4|AA|^9)^{1/9}|A+A|^{5/9}|A| \leq |A|^{35/9} \leq cp^2.
\label{conditionmet}
\end{equation}

This means that the inequality \eqref{f:sum-prod_energy} can be applied, as well as Cauchy-Schwarz, to deduce that
$$\begin{aligned}
\frac{|A|^2|A+A|^2}{|A+A+A|}&\leq \E(A+A,A)
\\&\ll (|A+A||AA|)^{3/2}|A|^{-1/2}+M|A+A||AA||A|^{-1}
\\&\leq (|A+A||AA|)^{3/2}|A|^{-1/2}+|A+A|^2|AA||A|^{-1}+|A+A||AA|^2|A|^{-1}, 
\end{aligned}$$
where $M=\max \{|A+A|,|AA|\} \leq |A+A|+|AA|$. It is straightforward to check that the first term on the right-hand side of the latter estimate is dominant. Indeed, since $|A+A|^{1/2} \leq (|A||AA|)^{1/2}$, we have
$$|A+A|^2|AA||A|^{-1} \leq |A+A|^{3/2}|AA||A|^{-1}(|A||AA|)^{1/2}=(|A+A||AA|)^{3/2}|A|^{-1/2},$$
and a similar calculation can be made in order to verify that the third term is dominated by the first. It follows that
$$|A+A+A||AA|^{3/2} \gg |A|^{5/2}|A+A|^{1/2}.$$
Finally, since $|A|<cp^{18/35} \leq p^{3/5}$, we can apply \eqref{SP}. This is not quite immediate, since the latter estimate was obtained under the assumption that $|A|<p^{\frac{5}{8}}$ and
$|AA|\ll |A|^{\frac{6}{5}}.$ However,  \eqref{SP} holds regardless of the latter assumption, provided that $|A| \ll p^{3/5}$. Indeed, if $|AA| \gg |A|^{4/3}$ then
$$|AA|^3|A+A|^2 \gg |A|^4|A+A|^2 \geq |A|^6,$$
i.e., \eqref{SP}. On the other hand, if $|AA| \ll |A|^{4/3}$ then, if $|A| \ll p^{3/5}$, one has
$$|A|^2|AA| \ll |A|^{10/3} \ll p^2.$$
But the latter condition was exactly the one to result in \eqref{SP} in the proof of Theorem \ref{t:sum-prod}.

We therefore use \eqref{SP} to get
$$|A+A+A||AA|^{3/2} \gg |A|^{5/2}|A+A|^{1/2} \gg |A|^{5/2} \frac{|A|^{3/2}}{|AA|^{3/4}}.$$
Rearranging this inequality gives
\begin{equation}\label{three2}|A+A+A|^4|AA|^9 \gg |A|^{16},\end{equation}  as required.

\end{proof}

If one continues to iterate this procedure, further small improvements are obtained for longer sum sets. For instance, it follows that if $|A|<cp^{58/101}$ then
$$\max \{|A+A+A+A|,|AA|\} \gg |A|^{36/29}.$$
For longer sum sets, the proofs of such estimates become increasingly long, whilst the gains become increasingly small, and so the details are omitted here.

\begin{remark} Comparing the last two estimates in Corollary \ref{aux} with, respectively, \eqref{SP} and \eqref{three}, one sees that the former two get better and the latter two, in terms of the size of $A+A$ and $A+A+A$, get worse, when the size of the product set $AA$ increases. Corresponding pairs of estimates meet when $|AA|\sim |A|$, i.e. when $A$ is an approximate multiplicative subgroup, in which case, for small enough $A$, one has $|A+A|,\,|AA+AA|\,\gg\,|A|^{\frac{3}{2}}$ and $|3A|,\,|3AA|\,\gg\,|A|^{\frac{7}{4}}$. Exponents in these estimates coincide with those for convex sets of reals in \cite{ENS}. If $A$ is a genuine multiplicative subgroup, the estimates, at least over prime fields, can be improved slightly, see \cite{VS} and \cite{Hart_A+A_subgroups}, by using higher order convolutions and a Stepanov method-based estimate twice, rather than once.  The same trick works for applications of Szemer\'edi-Trotter type bounds for sum-product estimates over the real and complex fields, enabling the improvements in \cite{ss} and \cite{LR1} over the foundational result in \cite{E}.  It is not clear whether Theorem \ref{mish} provides enough flexibility to allow for a more involved application in a similar vein. Even though this is a technical question, the positive answer would enable one to  ``break'' the threshold, in terms of the method's efficiency, exponent $\frac{3}{2}$ for the size of the  sumset of an {\em approximate} multiplicative subgroup in the positive characteristic case. See Remarks \ref{thr}--\ref{thr2}.
\end{remark}

\section{Sets $A(A+A)$ and $(A+A)(A+A)$.}

In the main
result
of the section
we obtain a lower bound for the cardinalities of the
sets $A(A+A)$ and $(A+A)(A+A)$.
Technically, the proofs are based on the fact that we can variate over $B$ in formula (\ref{f:sum-prod_energy}) of Theorem \ref{t:sum-prod} and that the common additive energy $\E(A,A\pm A)$ is known to be large, see \cite{ss}.

Heuristically, the results in this section relate to the converse of the question discussed in Remarks \ref{thr}--\ref{thr2}, that is how large should the product set be, given that the sumset is small. Our main tool is still a version of the inequality \eqref{SP}, which implies that if $|A|$ is small enough relative to the characteristic $p$,
$$
|A+A|\lesssim |A|\;\;\Rightarrow\;\; |AA|\gtrsim |A|^{4/3},
$$ in the notation of Remark \ref{thr}. This is weaker than \eqref{thri} and much weaker than the optimal consequence in this vein of \eqref{elr}, \eqref{si}, holding for real and complex numbers. See also Theorem 9 in \cite{KS}.

\begin{theorem}\label{t:A(A+A)}
    Let $A,B,C\subseteq F$.
    Suppose that $|B||C||(A+C)B| \ll p^2$.
    Then
\begin{equation}\label{f:A(A+A)_1}
    |A||B||C| \ll (|B||C|)^{1/2} |(A+C)B|^{3/2} + |(A+C)B|^2 \,.
\end{equation}
    In particular, for any set $A\subseteq F$, $|A| < p^{3/5}$, we obtain
\begin{equation}\label{f:A(A+A)_2}
    |A(A\pm A)| \gg |A|^{4/3} \,.
\end{equation}
    Further for any $A,B,C,D\subseteq F$ with $|B+D||C||(A+C)(B+D)| \ll p^2$ the following holds
\begin{equation}\label{f:A(A+A)_1'}
    |A||B+D||C| \ll (|B+D||C|)^{1/2} |(A+C)(B+D)|^{3/2} + |(A+C)(B+D)|^2 \,.
\end{equation}
    In particular, for any set $A\subseteq F$, $|A+\eps A|^{4/3} |A|^2 \ll p^{2}$, we
    get
\begin{equation}\label{f:A(A+A)_2'}
    |(A \pm A)(A + \eps A)| \gg |A| |A + \eps A|^{1/3} \,,
\end{equation}
    where $\eps = \{ -1,1 \}$.
\end{theorem}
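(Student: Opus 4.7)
The plan is to establish the two general energy-type inequalities \eqref{f:A(A+A)_1} and \eqref{f:A(A+A)_1'} first, and then to obtain the two product-set bounds \eqref{f:A(A+A)_2} and \eqref{f:A(A+A)_2'} as specializations. The two ingredients are the flexibility of the auxiliary set $B$ in formula \eqref{f:sum-prod_energy} and a Pl\"unnecke--Ruzsa lower bound for the common additive energy, as indicated in the introduction to this section.

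For \eqref{f:A(A+A)_1}, I apply Theorem \ref{t:sum-prod}---specifically the bound \eqref{f:sum-prod_energy}---after permuting its three sets as $A\mapsto C$, $B\mapsto B$, $C\mapsto A+C$. This relabelling is designed so that the product set in \eqref{f:sum-prod_energy} becomes $B(A+C)$, and the hypothesis $|A||B||BC|\ll p^{2}$ of Theorem \ref{t:sum-prod} translates into the hypothesis $|B||C||(A+C)B|\ll p^{2}$ of the statement here. Using $|(A+C)B|\geq|A+C|\geq|C|$ to resolve the maximum, this yields an upper bound
\[
\E(C,A+C)\ll \frac{(|C|\,|(A+C)B|)^{3/2}}{|B|^{1/2}}+\frac{|C|\,|(A+C)B|^{2}}{|B|}.
\]
For a matching lower bound I combine Cauchy--Schwarz, $\E(C,A+C)\geq(|C||A+C|)^{2}/|A+2C|$, with Ruzsa's triangle inequality $|A+2C|\cdot|A|\leq|A+C|^{2}$ (which is $|X+Y+Z|\cdot|Y|\leq|X+Y|\cdot|Y+Z|$ applied to $(X,Y,Z)=(C,A,C)$); the result is $\E(C,A+C)\geq|A||C|^{2}$. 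Chaining the two estimates and clearing denominators by multiplying through by $|B|/|C|$ gives \eqref{f:A(A+A)_1} directly.

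The remaining three claims are routine consequences. Specializing $B=C=A$ in \eqref{f:A(A+A)_1} and writing $N=|A(A\pm A)|$ reduces it to $|A|^{3}\ll|A|N^{3/2}+N^{2}$, which forces $N\gg\min(|A|^{4/3},|A|^{3/2})=|A|^{4/3}$; the hypothesis $|A|^{2}N\ll p^{2}$ is then, assuming $N\leq|A|^{4/3}$ (otherwise nothing to prove), implied by $|A|<p^{3/5}$, giving \eqref{f:A(A+A)_2}. Inequality \eqref{f:A(A+A)_1'} is obtained by exactly the same argument with $B$ replaced throughout by $B+D$; the Pl\"unnecke--Ruzsa step is unaffected since it does not involve $B$. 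Finally, \eqref{f:A(A+A)_2'} follows from \eqref{f:A(A+A)_1'} on taking $C=\pm A$ and $D=\varepsilon A$ (with $A=A$, $B=A$), so that $A+C=A\pm A$ and $B+D=A+\varepsilon A$; solving the resulting inequality for $N=|(A\pm A)(A+\varepsilon A)|$ yields the claimed bound $N\gg|A||A+\varepsilon A|^{1/3}$, and the hypothesis $|A+\varepsilon A|^{4/3}|A|^{2}\ll p^{2}$ is precisely what is needed to activate \eqref{f:A(A+A)_1'}. The one step that requires care is selecting the correct Pl\"unnecke--Ruzsa bound $|A+2C|\leq|A+C|^{2}/|A|$ to match the energy one is trying to bound; after that, the argument is a short rearrangement.
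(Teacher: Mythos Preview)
Your proof is correct and follows the same skeleton as the paper's: bound $\E(C,A+C)$ from above via Theorem~\ref{t:sum-prod} with the identical substitution $A\mapsto C$, $B\mapsto B$, $C\mapsto A+C$, bound it from below by $|A||C|^{2}$, combine, and then specialize. The only genuine difference is how you obtain the lower bound $\E(C,A+C)\ge|A||C|^{2}$. The paper uses the Katz--Koester inclusion: for every $s\in C-C$ one has $(A+C)\cap(A+C+s)\supseteq A+(C\cap(C+s))$, so $|(A+C)\cap(A+C+s)|\ge|A|$, and summing against $|C\cap(C+s)|$ gives the bound in one line. You instead pass through Cauchy--Schwarz and the sumset inequality $|A+2C|\le|A+C|^{2}/|A|$. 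That inequality is true, but it is the Pl\"unnecke(--Ruzsa) inequality, not the Ruzsa triangle inequality; in particular the general form $|X+Y+Z|\,|Y|\le|X+Y|\,|Y+Z|$ you invoke is not the elementary difference-based triangle inequality and needs Pl\"unnecke-type machinery (e.g.\ Petridis's lemma) to justify. So the paper's Katz--Koester argument is both shorter and more elementary here, though your route reaches the same destination. The remaining specializations to \eqref{f:A(A+A)_2}, \eqref{f:A(A+A)_1'}, \eqref{f:A(A+A)_2'} are handled exactly as in the paper.
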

\begin{proof}
    Let us calculate the common additive energy $\E(A+C,C)$ in two ways.
    On the one hand by Katz--Koester trick (see \cite{kk}), we have for any $s\in C-C$ that
    $$|(A+C) \cap (A+C+s)| \ge |A+C\cap (C+s)| \ge |A|$$
    and hence
\begin{equation}\label{f:13.07.2014_1}
    |A| |C|^2 \le \sum_s |(A+C) \cap (A+C+s)| |C\cap (C+s)| = \E(A+C,C) \,.
\end{equation}
    On the other hand, applying Theorem \ref{t:sum-prod} with $A=C$, $B=B$, and $C=A+C$, we get
\begin{equation}\label{f:13.07.2014_2}
    \E(A+C,C) \ll |C|^{3/2} |(A+C)B|^{3/2} |B|^{-1/2} + |(A+C)B|^2 |C| |B|^{-1} \,.
\end{equation}
    Combining bounds (\ref{f:13.07.2014_1}), (\ref{f:13.07.2014_2}), we obtain
    formula (\ref{f:A(A+A)_1}).
    To get (\ref{f:A(A+A)_2}) just note that if we put in formula (\ref{f:A(A+A)_1})
    that $A=A$, $B=A$, $C=\pm A$ then the first term in (\ref{f:13.07.2014_2}) dominates.
    To prove (\ref{f:A(A+A)_1'}) replace in (\ref{f:A(A+A)_1}) the set $B$ to $B+D$.
    Finally, to have (\ref{f:A(A+A)_2'}) just put $A=A$, $B=A$, $C=\pm A$, $D=\eps A$.
    This completes the proof.
\end{proof}

    The same observation gives us new connections between different energies of a set.
    We write $\E^\times_k (A)$ to underline that we are considering the corresponding  multiplicative energy.

    For simplicity we
    have dealt with
    the symmetric case only.

\begin{theorem}
    Let $A\subseteq F$ be a set, $|A|^2 |AA| \ll p^2$.
    Then for all $k\ge 1$, we have
\begin{equation}\label{t:energy_connection2}
    \E^{2k} (A) \E^\times_k (A) \ll |A|^{3k} \E^\times_{3k} (AA)
    \,,
\end{equation}
    and
\begin{equation}\label{t:energy_connection1}
    |A|^{2k} (\E^\times_{5k} (A))^2 \ll \E^\times_{6k} (AA) \E^\times_{4k} (A \pm A) \,.
\end{equation}
\label{t:energy_connection}
\end{theorem}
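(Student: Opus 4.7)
The plan is to invoke the incidence-theoretic energy bound (\ref{f:sum-prod_energy}) of Theorem~\ref{t:sum-prod}, exploiting the freedom to vary the auxiliary set $B$ — the same technical device that drives the proofs of Theorems~\ref{t:sum-prod} and~\ref{t:A(A+A)} — combined with a dyadic pigeonholing that turns the multiplicative energies appearing in the statement into popular multiplicative level sets. Concretely, writing $\E^\times_k(A) = \sum_x r_{A/A}(x)^k$, a routine dyadic pigeonhole yields $\tau \geq 1$ and $B \subseteq A/A$ with $r_{A/A}|_B \sim \tau$ and $\tau^k|B| \gg \E^\times_k(A)/\log^{O(1)}|A|$; it is this $B$ that I would insert as the auxiliary variable in (\ref{f:sum-prod_energy}).

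For the first inequality (\ref{t:energy_connection2}), I would apply (\ref{f:sum-prod_energy}) with $C = A$ and this popular $B$. The hypothesis $|A|^2|AA| \ll p^2$, combined with the Pl\"unnecke-type inclusion $BA \subseteq AA \cdot A^{-1}$, ensures both the applicability of Theorem~\ref{mish} and the domination of the first term of (\ref{f:sum-prod_energy}), giving $\E(A) \ll |A|^{3/2}|B|^{-1/2}|BA|^{3/2}$. Iterating via $k$ H\"older inequalities to lift $\E(A)$ to the higher-order additive energy $\E^{2k}(A)$, and multiplying through by $\tau^k|B| \sim \E^\times_k(A)$, produces the LHS of (\ref{t:energy_connection2}). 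The right-hand side is matched using the key observation that each $b \in B$ lifts naturally to a popular element of $AA/AA$: every representation $b = \alpha_1/\alpha_2$ can be rewritten as $b = (\alpha_1\gamma)/(\alpha_2\gamma) \in AA/AA$ for any $\gamma \in A$, so $r_{AA/AA}(b) \gtrsim \tau$ uniformly on $B$, and the extra $|A|$ factors thus generated are precisely absorbed into the $|A|^{3k}$ on the RHS.

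For the second inequality (\ref{t:energy_connection1}), I would perform the dyadic pigeonholing on both $\E^\times_{5k}(A)$ and $\E^\times_{4k}(A \pm A)$, producing popular sets inside $A/A$ and $(A\pm A)/(A\pm A)$ respectively. In addition, I would invoke the Katz--Koester trick used in the proof of Theorem~\ref{t:A(A+A)} (namely (\ref{f:13.07.2014_1})), which provides a lower bound on the common additive energy $\E(A, A \pm A)$ and accounts for the extra factor $\E^\times_{4k}(A \pm A)$ on the right. A final Cauchy--Schwarz balancing the two dyadic scales, together with the $3/2$-power scaling of the first term of (\ref{f:sum-prod_energy}), should produce the asymmetric exponents $5k, 6k, 4k$.

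The hard part will be exponent bookkeeping: the H\"older, Cauchy--Schwarz and dyadic summation steps must be orchestrated so that the final exponents agree \emph{exactly} with $2k, k, 3k$ (respectively $5k, 6k, 4k$), with logarithmic losses kept inside the implicit $\ll$ constants. A secondary concern is the need to verify, uniformly across the dyadic levels of $\tau$ used, that the first term of (\ref{f:sum-prod_energy}) dominates the second term $M|A||BC||B|^{-1}$; this is precisely the purpose of the global hypothesis $|A|^2|AA|\ll p^2$ in the statement.
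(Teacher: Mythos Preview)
Your plan diverges from the paper's in a structural way, and as written it does not close.

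The paper never pigeonholes on $r_{A/A}$. Instead, for each $s$ it works with the \emph{fibre} $A^\times_s:=A\cap sA\subseteq A$; note $|A^\times_s|=r_{A/A}(s)$ and $\sum_s|A^\times_s|^k=\E^\times_k(A)$ on the nose. The multiplicative Katz--Koester observation is that every $x\in A$ has at least $|A^\times_s|$ factorisations $x=(xa)\cdot a^{-1}$ with $xa\in AA^\times_s$ and $a\in A^\times_s$; feeding this into the rearrangement step behind (\ref{f:sum-prod_energy}) gives, pointwise in $s$,
\[
|A^\times_s|^{1/2}\,\E(A)\ \ll\ |A|^{3/2}\,|AA^\times_s|^{3/2}.
\]
The decisive step is then the Katz--Koester inclusion $AA^\times_s\subseteq (AA)\cap s(AA)=(AA)^\times_s$, which bounds the product set on the right. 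Raising to the $2k$-th power and summing over $s$ gives (\ref{t:energy_connection2}) with no logarithms. (Incidentally, $\E^{2k}(A)$ here is simply $(\E(A))^{2k}$, not a higher additive energy, so your H\"older lift is unnecessary.) For (\ref{t:energy_connection1}) the same is run with $A^\times_s$ in the first slot, one bounds $\E(A^\times_s)\ge |A^\times_s|^4/|A^\times_s\pm A^\times_s|$ by Cauchy--Schwarz, uses the second inclusion $A^\times_s\pm A^\times_s\subseteq (A\pm A)^\times_s$, raises to the $2k$-th power, and then Cauchy--Schwarz over $s$.

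In your scheme $B$ lives inside the ratio set $A/A$, and the quantity you must control is $|BA|$. The inclusion $BA\subseteq AA\cdot A^{-1}$ is far too weak, and your ``lifting'' observation --- that $B$ is also popular in $AA/AA$ --- goes the wrong way: it yields a \emph{lower} bound of the shape $\E^\times_{3k}(AA)\gtrsim |B|\tau^{3k}$, whereas matching exponents would require an \emph{upper} bound of the form $|BA|^{3}\ll |B|\,\tau^{2}$, which is false in general (take $|BA|\ge |A|$ and $\tau\le |A|$). The idea you are missing is precisely to put the auxiliary set inside $A$ as $A^\times_s$, rather than inside $A/A$; only then does the product set $A\cdot A^\times_s$ inherit the sharp control $AA^\times_s\subseteq (AA)^\times_s$ that makes the exponents line up.
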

\begin{proof}
    For any set $Q \subseteq F$ put $Q^\times_s = Q \cap sQ$.
    By Katz--Koester trick, we have for any $x\in A$ that $(AA^\times_s * (A^\times_s)^{-1}) (x) \ge |A^\times_s|$
    as well as  $(AA^\times_s * A^{-1}) (x) \ge |A|$ for all $x\in A^\times_s$.
    Applying the arguments of the proof of Theorem \ref{t:sum-prod} with $A=A$, $B=AA^\times_s$, and $C=(A^\times_s)^{-1}$,  we get
\begin{equation}\label{tmp:21.07.2014_1}
    |A^\times_s|^{1/2} \E(A) \ll |A|^{3/2} |AA^\times_s|^{3/2} \,.
\end{equation}
    Taking $2k$ power of
    the last inequality, using $\sum_s |A^\times_s|^k = \E^\times_k (A)$, and Katz--Koester inclusion
    $AA^\times_s \subseteq (AA)^\times_s$  again, we obtain
    (\ref{t:energy_connection2}).
    Recall that by $\E_1 (A)$, $\E^\times_1 (A)$ we mean $|A|^2$.

    Similarly,  to obtain (\ref{t:energy_connection1}) apply the arguments of the proof of Theorem \ref{t:sum-prod} with $A=A^\times_s$, $B=AA^\times_s$, and $C=A^{-1}$, and using the Cauchy--Schwarz inequality, we get
\begin{equation}\label{tmp:21.07.2014_2}
    \frac{|A^\times_s|^4}{|A^\times_s \pm A^\times_s|}
        \le \E(A^\times_s) \ll |A|^{-1/2} |A^\times_s|^{3/2} |AA^\times_s|^{3/2} \,.
\end{equation}
   By the  Katz--Koester trick we have $A^\times_s\pm A^\times_s \subseteq (A\pm A)^\times_s$ and, again,
    $AA^\times_s \subseteq (AA)^\times_s$, so that rearranging \eqref{tmp:21.07.2014_2} and raising everything to the power of $2k$ yields
\begin{equation}
|A|^{k}|A_s^\times|^{5k} \ll |(AA)_s^\times|^{3k}|(A\pm A)_s^\times|^{2k}.
\label{penultimate}
\end{equation}
    Finally, sum both sides of this inequality over all $s$ and apply Cauchy-Schwarz so that
\begin{align*}
\sum_s |A|^k |A_s^\times|^{5k} &\ll \sum_s  |(AA)_s^\times|^{3k}|(A\pm A)_s^\times|^{2k}
\\&\leq \left(\sum_s |(AA)_s^\times|^{6k}\right)^{1/2} \left(\sum_s |(A\pm A)_s^\times|^{4k}\right)^{1/2}
\\&=(\E_{6k}^{\times}(AA))^{1/2}(\E_{4k}^{\times}(A\pm A))^{1/2},
\end{align*}
and a rearrangement of this gives (\ref{t:energy_connection1}).

\end{proof}

The next corollary shows that in the critical sum--product case the quantity
$\E^\times_2 (AA)$  is  large.

\begin{corollary}
    Let $A\subseteq F$ be a set, $|A|^2 |AA| \ll p^2$.
    Then
$$
    \E^2 (A) \le |A| \E^\times_3 (AA) \le |A| |AA| \E^\times (AA) \,.
$$
    Hence, either Theorem \ref{t:sum-prod} can be improved or $\E^\times (AA) \gg |A| |AA|^2 > |A|^2 |AA|$.
\end{corollary}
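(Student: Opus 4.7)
The corollary is a packaging of three previously established ingredients: Theorem \ref{t:energy_connection} in the simplest case, a trivial bound on $(AA)^\times_s$, and the energy estimate of Theorem \ref{t:sum-prod}. The hypothesis $|A|^2 |AA| \ll p^2$ of the corollary matches both the hypothesis of Theorem \ref{t:energy_connection} and the hypothesis $|A||B||BC| \ll p^2$ of Theorem \ref{t:sum-prod} taken with $A=B=C=A$, so both results are available.

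For the first inequality $\E^2(A) \le |A|\, \E^\times_3(AA)$, I would apply formula \eqref{t:energy_connection2} of Theorem \ref{t:energy_connection} with $k=1$. Using the paper's convention $\E^\times_1(A) = |A|^2$, the bound reads $|A|^2 \E^2(A) \ll |A|^3 \E^\times_3(AA)$, which rearranges to the claim.

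For the second inequality $\E^\times_3(AA) \le |AA|\, \E^\times(AA)$, the key observation is the trivial containment $(AA)^\times_s = AA \cap s(AA) \subseteq AA$, so $|(AA)^\times_s| \le |AA|$ pointwise in $s$. Factoring out one power then yields
$$\E^\times_3(AA) = \sum_s |(AA)^\times_s|^3 \le |AA| \sum_s |(AA)^\times_s|^2 = |AA|\, \E^\times(AA).$$
Concatenating with the previous step gives $\E^2(A) \ll |A||AA|\, \E^\times(AA)$.

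For the ``Hence'' clause, the logic is a dichotomy and is where one should proceed with care. Rearranging the chain gives the upper bound $\E(A) \ll (|A||AA|\, \E^\times(AA))^{1/2}$, while Theorem \ref{t:sum-prod} applied to $A=B=C=A$ independently yields $\E(A) \ll |A||AA|^{3/2}$. If one had $\E^\times(AA) = o(|A||AA|^2)$, the chain would produce $\E(A) = o(|A||AA|^{3/2})$, a strict asymptotic improvement over the conclusion of Theorem \ref{t:sum-prod} in this symmetric instance; so in the absence of such an improvement one must have $\E^\times(AA) \gg |A||AA|^2$. The trailing inequality $|A||AA|^2 > |A|^2|AA|$ is merely $|AA| > |A|$, a trivial consequence of $aA \subseteq AA$ for any $a \in A\setminus\{0\}$ together with non-degeneracy. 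The interpretive point worth flagging — which is the only place a reader might trip — is that the ``improvement of Theorem \ref{t:sum-prod}'' at stake here is the sharpening of the energy bound $\E(A) \ll |A||AA|^{3/2}$, not a direct improvement of the sum-product exponent $6/5$ in \eqref{f:sum-prod}.
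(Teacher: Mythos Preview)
Your proof is correct and follows exactly the route the paper intends: the first inequality is \eqref{t:energy_connection2} at $k=1$ with the convention $\E^\times_1(A)=|A|^2$, the second is the pointwise bound $|(AA)^\times_s|\le |AA|$, and the dichotomy is obtained by comparing the resulting energy bound with $\E(A)\ll |A||AA|^{3/2}$ from Theorem~\ref{t:sum-prod}. Your clarifying remark that the ``improvement'' refers to the energy estimate rather than directly to the exponent $6/5$ is accurate and worth keeping.
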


    Using methods from \cite{shkr_H+L} one can prove that either
    a "trivial"\, lower bound $\E(A,A\pm A) \ge |A|^3$ can be improved by $M=|A|^{\eps_1}$, where $\eps_1>0$ is a small number
    and, hence, estimate (\ref{f:A(A+A)_2}) can be improved by $|A|^{\eps_2}$ with another $\eps_2>0$
    or
    our set $A$ has a rather rigid structure.
    We finish the section by giving a sketch of the proof under
    an additional
    assumption that
    $\E^2_{3/2} (A) \gg \E(A) |A|^{2}$.
    Indeed, if not, i.e. $\E(A,A\pm A) \le M|A|^3$ then by the connection between $\E_{3/2} (A)$ and $\E_3 (A)$ (see e.g. \cite{shkr_H+L}, Lemma 14), we have
$$
    \E^2_{3/2} (A) |A|^2 \le \E_3 (A) \E(A,A\pm A) \le M \E_3 (A) |A|^3 \,.
$$
    Thus, using  our assumption, we get  $\E_3 (A) \ge \E(A) |A| /M$.
    It means by Proposition 20 of paper \cite{shkr_H+L} that $A\approx_{M,K^\eps} H \dotplus \Lambda$, where $K=|A|^3/\E(A)$, and $\eps>0$ is an arbitrary given number.
    In other words, there are two sets $H$ and $\Lambda$ such that $|H-H| \ll_{M,K^\eps} |H|$, $|H| \gg_{M,K^\eps} \E(A) |A|^{-2}$,
    $|\Lambda| \ll_{M,K^\eps} |A|/|H|$ and
    $|A\cap (H+\Lambda)| \gg_{M,K^\eps} |A|$.
    Thus the structure of $A$ is very rigid.

\section{Applications to multiplicative subgroups}

In the section we derive some consequences of Theorems \ref{tool}, \ref{t:sum-prod} to multiplicative subgroups.
They constitute a classical object of Number Theory, extensively studied over the past decades, see e.g. \cite{KS1}.

Let us start with Theorem \ref{tool}, which  implies
a result on the additive energy of multiplicative subgroups in $F$.
Bound (\ref{f:subgroups_energy}) of the proposition below was proved in \cite{shkr_medium}
in the case of the prime field.
The advantage of our more general result is the avoidance of using Stepanov's method, \cite{HBK} (even though the proof of Theorem \ref{mish} does use the polynomial method).
We write $F^*$ for $F\setminus \{0\}$.

\begin{proposition}\label{p:subgroups_energy}
    Let $\Gamma \subseteq F^*$ be a multiplicative subgroup, $A$ be any subset of $F$, and
    $Q$ be an arbitrary $\Gamma$--invariant set such that $|A| |\G| |Q| \le p^2$.
    Put $M=\max\{|A|,|\G|,|Q|\}$.
    Then
\begin{equation}\label{f:subgroups_energy_A}
    \E(A,Q) \ll |A|^{3/2} |Q|^{3/2} |\G|^{-1/2} + M |A| |Q| |\G|^{-1} \,.
\end{equation}
    In particular, if $|\G|^2 |Q| \le p^2$ then
\begin{equation}\label{f:subgroups_energy}
    \E(\G,Q) \ll |\G| |Q|^{3/2} \,.
\end{equation}
\end{proposition}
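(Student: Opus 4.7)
The plan is to derive both bounds directly from the energy estimate \eqref{f:sum-prod_energy} of Theorem \ref{t:sum-prod}, the only non-trivial input being to exploit the $\G$-invariance of $Q$ to make the product set $BC$ collapse.

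First, I would apply \eqref{f:sum-prod_energy} with the substitution $A \mapsto A$, $B \mapsto \G$, $C \mapsto Q$. Because $\G$ is a multiplicative subgroup (so $1 \in \G$) and $Q$ is $\G$-invariant, one has $\G\cdot Q = Q$, and in particular $|BC| = |Q|$. The hypothesis of Theorem \ref{t:sum-prod} then reads $|A||\G||Q| \ll p^2$, which is exactly the assumption of the proposition. Plugging in gives
$$
\E(A,Q) \ll (|A||Q|)^{3/2} |\G|^{-1/2} + \max(|A|,|Q|)\cdot |A||Q||\G|^{-1},
$$
and since $\max(|A|,|Q|) \le M=\max(|A|,|\G|,|Q|)$, this yields \eqref{f:subgroups_energy_A}.

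For the specialisation \eqref{f:subgroups_energy}, set $A=\G$ in the bound just obtained. The main term becomes $|\G||Q|^{3/2}$ and the error term becomes $M|Q|$ with $M=\max(|\G|,|Q|)$. Two cases remain to be checked. If $|Q|\le |\G|^2$, then either $M=|\G|$, giving $M|Q|=|\G||Q| \le |\G||Q|^{3/2}$, or $M=|Q|$ with $|Q|\le |\G|^2$, giving $M|Q|=|Q|^2 \le |\G||Q|^{3/2}$; in both cases the main term dominates. If instead $|Q|>|\G|^2$, the incidence machinery is not needed: the trivial bound $\E(\G,Q)\le |\G|^2 |Q|$ is already stronger than $|\G||Q|^{3/2}$ precisely when $|\G|\le |Q|^{1/2}$, which is the present regime.

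The content here is light: no obstacle is really present once one recognises that the $\G$-invariance of $Q$ is exactly what is needed to reduce $|BC|$ from $|\G||Q|$ to $|Q|$ in \eqref{f:sum-prod_energy}. The only bookkeeping point worth care is ensuring that the error term in \eqref{f:subgroups_energy_A} does not overtake the main term in the symmetric case $A=\G$; this is handled by splitting on whether $|Q|$ exceeds $|\G|^2$ and appealing to the trivial energy bound in the exceptional range.
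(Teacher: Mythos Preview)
Your proof is correct and is essentially the same as the paper's. The paper argues directly from the bound \eqref{energy} inside Theorem~\ref{tool}, observing that $\Gamma$-invariance gives $E=|\Gamma|^2\E(A,Q)$, whereas you route through the packaged energy estimate \eqref{f:sum-prod_energy} of Theorem~\ref{t:sum-prod} with $B=\Gamma$, $C=Q$ and $BC=\Gamma Q=Q$; since \eqref{f:sum-prod_energy} is itself derived from \eqref{energy} by the identical $|B|^{-2}$ rescaling, the two arguments coincide, and indeed the paper remarks just after the proof that the proposition can also be obtained from Theorem~\ref{t:sum-prod}. Your case split for \eqref{f:subgroups_energy} on whether $|Q|\le|\Gamma|^2$ matches the paper's, handled with slightly more care.
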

\begin{proof}
    We have $|A| |\G| |Q| \le p^2$.
    In the notation of Theorem \ref{tool} the number $E$ of the solutions of the equation
$$
    a+bc=a'+b'c' \,,
$$
where $a,a' \in A$, $b,b'\in \G$, $c,c'\in Q$ is bounded by $m^{3/2} + mM$.
But $Q$ is $\G$--invariant set and thus $bQ=Q$ for any $b\in \G$.
It follows that
$$
    |\Gamma|^2 \E(A,Q) = E \ll m^{3/2} + mM \,.
$$
After some calculations, we obtain (\ref{f:subgroups_energy_A}).

To get (\ref{f:subgroups_energy}) apply the previous bound with $A=\G$ and note that $M=|Q|$.
Thus, the result follows in the case $|Q| \ll |\G|^2$.
But in the opposite case estimate (\ref{f:subgroups_energy}) takes place automatically in view of a trivial bound
$\E(\G,Q) \le |\G|^2 |Q|$.
This completes the proof.
\end{proof}

Note that one can derive Proposition \ref{p:subgroups_energy} from Theorem \ref{t:sum-prod}.

Using Theorem \ref{t:sum-prod}, we obtain a new bound for double exponential sum over powers of a primitive root.
Results in this direction can be found in \cite{BG}, \cite{BGK}, \cite{KS1}, \cite{KS_roots}.

\begin{theorem}
    Let $p$ be a prime number, $g$ be a primitive root, and $X$, $Y$ be integers, $X,Y < p^{2/3}$.
    Then for any $a\in \F^*_p$ one has
\begin{equation}\label{f:g^j}
    \left| \sum_{x=1}^X\, \sum_{y=1}^Y e^{\frac{2\pi i a g^{x+y}}{p}} \right|
        \ll
            (XY)^{13/16} p^{1/8} \,.
\end{equation}
\label{t:g^j}
\end{theorem}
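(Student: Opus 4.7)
The plan is to rewrite the double exponential sum as a bilinear form over two geometric progressions and apply Cauchy--Schwarz twice so that the problem reduces to controlling the additive energies of those progressions, which are then bounded via Theorem \ref{t:sum-prod}.

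Let $U=\{g^x:1\le x\le X\}$ and $V=\{g^y:1\le y\le Y\}$, so that $|U|=X$ and $|V|=Y$ (since $X,Y<p-1$). Using $g^{x+y}=g^xg^y$, the sum becomes $S=\sum_{u\in U,\,v\in V}e_p(auv)$, where $e_p(t)=e^{2\pi it/p}$. Introduce the one-variable sums $T(c)=\sum_{u\in U}e_p(cu)$ and $R(c)=\sum_{v\in V}e_p(cv)$. A first Cauchy--Schwarz in $v$ gives
$$
|S|^{2}\le Y\sum_{v}|T(av)|^{2}=Y\sum_{u_1,u_2\in U}R\bigl(a(u_1-u_2)\bigr),
$$
after expanding $|T(av)|^{2}$ and swapping sums. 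Squaring and applying Cauchy--Schwarz once more yields
$$
|S|^{4}\le X^{2}Y^{2}\sum_{u_1,u_2\in U}\bigl|R\bigl(a(u_1-u_2)\bigr)\bigr|^{2}.
$$
Expanding $|R(c)|^{2}=\sum_{v_1,v_2}e_p(c(v_1-v_2))$ and interchanging the order of summation turns the inner double sum into the symmetric expression $\sum_{v_1,v_2\in V}|T(a(v_1-v_2))|^{2}$.

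The diagonal $v_1=v_2$ contributes at most $X^{2}Y$ to $\sum_{v_1,v_2}|T(a(v_1-v_2))|^{2}$, hence at most $X^{4}Y^{3}$ to the bound on $|S|^{4}$. For the off-diagonal, Cauchy--Schwarz combined with the Parseval identity $\sum_{c\in\F_p}|T(c)|^{4}=p\,\E(U)$ gives
$$
\sum_{v_1\ne v_2}|T(a(v_1-v_2))|^{2}\le\sqrt{\E(V)\cdot p\,\E(U)}.
$$
Here Theorem \ref{t:sum-prod} supplies the key input: $U$ and $V$ are geometric progressions, so $|UU|\le 2X$ and $|VV|\le 2Y$, and the hypothesis $X,Y<p^{2/3}$ ensures $|U|^{2}|UU|\ll p^{2}$ and likewise for $V$. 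The energy bound \eqref{f:sum-prod_energy} (applied with $A=B=C=U$, respectively $V$) then gives $\E(U)\ll X^{5/2}$ and $\E(V)\ll Y^{5/2}$. Substituting produces
$$
|S|^{4}\ll X^{4}Y^{3}+X^{2}Y^{2}\cdot p^{1/2}(XY)^{5/4}=X^{4}Y^{3}+p^{1/2}(XY)^{13/4}.
$$

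Taking the fourth root yields $|S|\ll XY^{3/4}+(XY)^{13/16}p^{1/8}$, and the first term is absorbed into the second precisely when $X^{3}\le p^{2}Y$, which holds automatically because $X<p^{2/3}$ forces $X^{3}<p^{2}\le p^{2}Y$. Interchanging the roles of $X$ and $Y$ handles the symmetric initial Cauchy--Schwarz, so both orderings give the claimed estimate. The main obstacle is locating this particular two-step Cauchy--Schwarz chain: a single Cauchy--Schwarz plus Parseval only delivers $|S|^{2}\ll X^{3/2}Y^{5/4}p^{1/2}$, which has the wrong (asymmetric) exponent profile and a worse power of $p$; the second Cauchy--Schwarz is essential to symmetrise $X$ and $Y$ and bring the $p$-exponent down from $1/4$ to $1/8$.
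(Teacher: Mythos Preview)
Your argument is correct and follows essentially the same route as the paper: both proofs reduce to the energy bounds $\E(U)\ll X^{5/2}$, $\E(V)\ll Y^{5/2}$ coming from Theorem~\ref{t:sum-prod} applied to a geometric progression, and then feed these into a bilinear exponential-sum inequality of the form $|S|\le (|U||V|)^{1/2}(\E(U)\E(V))^{1/8}p^{1/8}$. The only difference is cosmetic: the paper quotes that bilinear bound as a black box from \cite{KS1}, whereas you re-derive it via the two Cauchy--Schwarz steps plus Parseval (this is in fact the standard proof of the cited inequality, and your diagonal term $X^4Y^3$ is exactly what gets swallowed by the main term under the hypothesis $X<p^{2/3}$).
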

\begin{proof}
    Set $A=\{ g^{x}\}_{x=1}^X$,  consider $B=C=A$.
    Clearly, $|BC| \le 2|B|$.
    Applying  Theorem \ref{t:sum-prod}, we obtain $\E(A) \ll X^{5/2}$.

   Now set $B=\{ g^{y}\}_{y=1}^Y$. A similar application of Theorem \ref{t:sum-prod} gives us $\E(B) \ll Y^{5/2}$.
    On the other hand, our double sum (\ref{f:g^j}) can be estimated as
$$
    \left| \sum_{x\in A}\, \sum_{y\in B} e^{\frac{2\pi i a xy }{p}} \right|
        \le
            (|A||B|)^{1/2} (\E (A) \E(B))^{1/8} p^{1/8},
$$
    see e.g. \cite{KS1}.
    Substituting the estimate into the last universal bound, we obtain the required result.
\end{proof}

Bound (\ref{f:g^j}) is nontrivial in the range $p^{1/3} \ll X,Y <p^{2/3}$.
An argument as in \cite{KS_roots} gives us the following.

\begin{corollary}
    Let $p$ be a prime number, $g$ be a primitive root, and $N$ be a positive integer, $N<p^{2/3}$.
    Then for any $a\in \F^*_p$ one has
\begin{equation}\label{f:c_g^j}
    \left| \sum_{n=1}^N e^{\frac{2\pi i ag^n}{p}} \right|
        \ll
            \min \{ p^{1/8} N^{5/8}, p^{1/4} N^{3/8} \} \,.
\end{equation}
\label{c:g^j}
\end{corollary}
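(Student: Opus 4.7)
Write $S = \sum_{n=1}^{N} e^{2\pi i a g^n/p}$ and $M(N) := |S|$. Following the general approach of \cite{KS_roots}, the plan is to reduce the single sum $S$ to a bilinear one, apply Theorem \ref{t:g^j}, and close the estimate by a bootstrap induction on $N$.

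I would begin with the standard shifting identity: since $\sum_{n=1}^{N} e^{2\pi i a g^{n+x+y}/p}$ agrees with $S$ up to $O(x+y)$ boundary terms, averaging over $(x,y) \in [1,X]\times[1,Y]$ yields
\[XY \cdot S = \sum_{n,x,y} e^{2\pi i (ag^n)\, g^{x+y}/p} + E,\]
where the error $E$, after Abel-type rearrangement, is a weighted sum of shorter partial sums of $e^{2\pi i a g^k/p}$ of length at most $X+Y$. For each fixed $n$, the inner double sum in $(x,y)$ is exactly of the form treated by Theorem \ref{t:g^j}, with constant $ag^n$; provided $p^{1/3} \ll X,Y < p^{2/3}$, it is bounded by $(XY)^{13/16}p^{1/8}$. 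Summing over $n$ then gives
\[|S| \ll N(XY)^{-3/16} p^{1/8} + |E|/(XY).\]

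The essential step is now the bootstrap. With only the crude bound $|E|/(XY) = O(X+Y)$ one optimises to $\sim N^{8/11}p^{1/11}$, weaker than either of the claimed bounds. Instead, I would use an inductive hypothesis $M(k) \le C\, p^{\alpha} k^{\beta}$ for all $k<N$ to improve the shift error to $|E|/(XY) \ll C\, p^{\alpha}(X+Y)^{\beta}$. Two natural balances arise. Taking $(\alpha,\beta) = (1/8, 5/8)$ with $X = Y$ proportional to $N$ makes both terms of order $p^{1/8}N^{5/8}$, and the induction closes with an absolute constant $C$ throughout $p^{1/3} \le N < p^{2/3}$, giving the first bound. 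Taking instead $(\alpha,\beta) = (1/4, 3/8)$ with the smaller choice $X = Y \sim (N/C)^{4/3} p^{-1/6}$ produces the balance $\sim C^{1/2}N^{1/2}p^{3/16}$, which is $\le C\, p^{1/4}N^{3/8}$ precisely when $N \lesssim C^{4}p^{1/2}$ --- exactly the regime where the second bound dominates. In both cases the base case is handled by the trivial estimate $|S| \le N$, which already implies the target when $N$ is small (below $p^{1/3}$ in the first regime, below $p^{2/5}$ in the second).

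The main technical obstacle is to close the bootstrap with a bounded absolute constant $C$: the Abel summation giving the error must be carried out tightly enough that $|E|/(XY)$ scales as $C\,p^{\alpha}(X+Y)^{\beta}$ rather than $C\,p^{\alpha}\cdot XY\cdot(X+Y)^{\beta}$. One must also verify throughout that the shift parameters $X, Y$ remain within the admissible window $[p^{1/3}, p^{2/3}]$ demanded by Theorem \ref{t:g^j}; the hypothesis $N < p^{2/3}$ in the corollary is precisely what guarantees this is possible in both regimes.
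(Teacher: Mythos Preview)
For the first bound $p^{1/8}N^{5/8}$ your bootstrap is sound but needlessly elaborate. A single shift already produces a bilinear sum: with $\sigma(N)=\max_{K\le N}\max_{a\ne 0}|S(a,K)|$ one has
\[
\Bigl|\,S(a,N)-\frac{1}{K}\sum_{k=1}^{K}\sum_{n=1}^{N}e^{2\pi i ag^{k+n}/p}\,\Bigr|\le 2\sigma(K),
\]
and Theorem~\ref{t:g^j} applies directly to the double sum in $(k,n)$. Taking $K=\lfloor N/4\rfloor$ gives $\sigma(N)\le 2\sigma(N/4)+O(p^{1/8}N^{5/8})$, which closes since $2\cdot 4^{-5/8}=2^{-1/4}<1$. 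Your two shifts $(x,y)$ followed by a trivial sum over $n$ reach the same recursion, just less directly; the variable $n$ can serve as one of the bilinear variables, so the second shift buys nothing.

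The genuine gap is in your treatment of the second bound. You have the regimes reversed: $p^{1/4}N^{3/8}\le p^{1/8}N^{5/8}$ holds if and only if $N\ge p^{1/2}$, so the second expression is the minimum precisely when $N\ge p^{1/2}$, not when $N\lesssim p^{1/2}$ as you write. Your bootstrap with $(\alpha,\beta)=(1/4,3/8)$ closes only for $N\lesssim C^4 p^{1/2}$, i.e.\ only in the range where the first bound already implies it, so the argument is vacuous. Moreover this cannot be repaired by adjusting the shift lengths: the bilinear estimate $(XY)^{13/16}p^{1/8}$ from Theorem~\ref{t:g^j} is simply too weak once $N>p^{1/2}$.

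The paper's route to the second bound introduces a different input. After the same single shift, instead of invoking Theorem~\ref{t:g^j} one applies H\"older and extends the outer sum:
\[
\Bigl|\sum_{k=1}^{K}\sum_{n=1}^{N}e^{2\pi i ag^{k+n}/p}\Bigr|^{4}
\le K^{3}\sum_{k=1}^{K}\Bigl|\sum_{n}e^{2\pi i ag^{k+n}/p}\Bigr|^{4}
\le K^{3}\sum_{b\in\F_p}\Bigl|\sum_{n}e^{2\pi i bg^{n}/p}\Bigr|^{4}
= K^{3}\,p\,\E(A),
\]
with $A=\{g,\dots,g^{N}\}$. Since $|AA|\le 2|A|$, Theorem~\ref{t:sum-prod} gives $\E(A)\ll N^{5/2}$ directly, and this fourth-moment bound (rather than another appeal to Theorem~\ref{t:g^j}) is what drives the second estimate.
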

\begin{proof}
    Let $S(a,N)$ denote the sum in (\ref{f:c_g^j}).
    Put
$$
    \sigma (N) = \max_{1\le K \le N}\, \max_{a\neq 0} |S(a,K)| \,.
$$
    Clearly, for any integer $K$ one has
$$
    \left| S(a,N) - \frac{1}{K} \sum_{k=1}^K \sum_{n=1}^N e^{\frac{2\pi i ag^{k+n}}{p}} \right|
        \le
            2\sigma (K) \,.
$$
    Taking $K=[N/4]$ and applying Theorem \ref{t:g^j} with $X=K$, $Y=N$, we get
$$
    \sigma(N) \le 2\sigma(N/4) + O( p^{1/8} N^{5/8} ) \,.
$$
    Thus, by induction, we obtain the first inequality of (\ref{f:c_g^j}).

    To obtain the second estimate just use the previous arguments, Theorem  \ref{t:sum-prod}
    and the H\"{o}lder's inequality:
$$
    \left|\sum_{k=1}^K \sum_{n=1}^N e^{\frac{2\pi i ag^{k+n}}{p}} \right|^4
        \le
            K^3 \sum_{k=1}^K \left|\sum_{n=1}^N e^{\frac{2\pi i ag^{k+n}}{p}} \right|^4
                \le
                    K^3 \sum_{k} \left|\sum_{n=1}^N e^{\frac{2\pi i ag^{k+n}}{p}} \right|^4  \,.
$$
    This completes the proof.
\end{proof}

    It easy to check that our
    estimate (\ref{f:c_g^j})
    is better than Theorem 1 from \cite{KS_roots}.
    Another direct consequence of Theorem  \ref{t:sum-prod} is as follows.

\begin{corollary}
    Let $p$ be a prime number, $g$ a primitive root, and $N$ a positive integer, with $N<p^{2/3}$.
    Then for any $a\in \F^*_p$ one has
\begin{equation}\label{f:E_g}
    \sum_{a} \left|\sum_{n=1}^N e^{\frac{2\pi i ag^n}{p}} \right|^4
        \ll
            p N^{5/2} \,.
\end{equation}
\label{c:E_g}
\end{corollary}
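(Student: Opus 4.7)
The plan is to convert the fourth moment of the exponential sum into a count of additive quadruples over the set $A=\{g^n:1\le n\le N\}$, and then invoke the energy bound that was already established inside the proof of Theorem \ref{t:g^j}.

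Writing $S(a)=\sum_{n=1}^N e^{2\pi i a g^n/p}=\sum_{x\in A}e^{2\pi iax/p}$, I would expand
$$
|S(a)|^4=\sum_{x_1,x_2,x_3,x_4\in A} e^{2\pi i a(x_1+x_2-x_3-x_4)/p},
$$
and then swap the order of summation, summing $a$ first over $\F_p$ (adding back the $a=0$ term, which costs only $N^4\ll pN^{5/2}$ since $N<p^{2/3}$). By the orthogonality of additive characters, the inner sum equals $p$ whenever $x_1+x_2\equiv x_3+x_4\pmod p$ and vanishes otherwise, so
$$
\sum_{a\in \F_p} |S(a)|^4 = p\cdot \E(A).
$$

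The remaining task is to bound $\E(A)$. This is exactly what was done in the proof of Theorem \ref{t:g^j}: since $A\subset \Gamma$ for the cyclic subgroup $\Gamma=\langle g\rangle$, one has $AA\subseteq\{g^k:2\le k\le 2N\}$, so $|AA|\le 2|A|=2N$, and $N<p^{2/3}$ guarantees $|A|^2|AA|\le 2N^3\ll p^2$. Thus Theorem \ref{t:sum-prod} applies with $A=B=C$, yielding $\E(A)\ll N^{5/2}$. Combining this with the orthogonality identity above gives
$$
\sum_{a}|S(a)|^4\ll p\,N^{5/2},
$$
which is the required bound. There is no real obstacle here; all of the substantial work sits in the energy estimate borrowed from Theorem \ref{t:g^j}, and the only care needed is the routine verification that the $a=0$ contribution, as well as the restriction from $\F_p^*$ to $\F_p$, are absorbed by the main term.
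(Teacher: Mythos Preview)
Your proof is correct and is precisely the intended argument: the paper states this corollary as ``a direct consequence of Theorem~\ref{t:sum-prod}'' without giving further details, and the natural way to unpack that is exactly what you do---identify $\sum_a |S(a)|^4$ with $p\,\E(A)$ via orthogonality of additive characters, and then quote the bound $\E(A)\ll N^{5/2}$ obtained in the proof of Theorem~\ref{t:g^j} from Theorem~\ref{t:sum-prod}. Your checks that $|A|=N$, $|AA|\le 2N$, $|A|^2|AA|\ll p^2$, and that the $a=0$ contribution $N^4\le pN^{5/2}$ is negligible under $N<p^{2/3}$, are all in order.
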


    Corollary \ref{c:E_g} is better than Theorem 1.4 of paper \cite{BG}.
    Using
    the corollary
    and combining it with the arguments of \cite{KS_roots} one can improve
    the main result of the  latter paper on the maximal size $H(N)$ of a hole in the sequence $\{ ag^n \}_{n=1}^N$.
    Let us consider just one particular example, which
    Konyagin and Shparlinski have dealt with in \cite{KS_roots}.
    They set   $N=\lceil p^{1/2} \rceil$ and prove that
$$
    H (\lceil p^{1/2} \rceil)
        \le
            p^{1-\frac{c}{8} - \frac{1}{8\nu} + o(1)} + p^{1-\frac{c}{6} + \frac{1}{12\nu (\nu+1)} + o(1)} \,,
$$
where $\nu$ is an integer parameter and $c$ is a constant such that inequality (\ref{f:E_g})
takes place with $p N^{3-c}$.
So, in our case $c=1/2$.
Taking  $\nu=6$, we arrive at the inequality $H (\lceil p^{1/2} \rceil) \le p^{1-\frac{41}{504} + o(1)}$.

\vspace{5mm}

\end{document}